\def\be{\begin{equation}}
\def\ee{\end{equation}}
\def\bse{\begin{subequations}}
\def\ese{\end{subequations}}
\newtheorem{thm}{Theorem}
\newtheorem{lem}{Lemma}[section]
\newtheorem{prop}{Proposition}[section]
\def\bse{\begin{subequations}}
\def\ese{\end{subequations}}
\def\XXint#1#2#3{{
\setbox0=\hbox{$#1{#2#3}{\int}$}
\vcenter{\hbox{$#2#3$}}\kern-.5\wd0}}
\title{Residual vanishing for blowup solutions to 2D Smoluchowski-Poisson equation}
\author{Takashi Suzuki} 
\date{\today}
\begin{document}
\let\cleardoublepage\clearpage

\maketitle


\begin{abstract}
We study Smoluchowski-Poisson equation in two space dimensions provided with Dirichlet boundary condition for the Poisson part.  For this equation several profiles of blowup solution have been noticed.  Here we show the residual vanishing. 
\end{abstract}

\section{Introduction}\label{sec:1}

We study parabolic-elliptic system  proposed in statistical physics to describe the motion of mean field of many self-gravitating Brownian particles \cite{sc02}.  It is composed of the Smoluchowski part 
\begin{equation}
u_t=\Delta u-\nabla \cdot u\nabla v \quad \mbox{in $\Omega\times(0,T)$} 
 \label{s}
\end{equation} 
with null-flux boundary condition 
\begin{equation}
\frac{\partial u}{\partial \nu}-u\frac{\partial v}{\partial\nu}=0 \quad \mbox{on $\partial\Omega \times (0,T)$} 
 \label{eqn:s3} 
\end{equation}
and the Poisson part in the form of 
\begin{equation}
-\Delta v=u, \quad \left. v\right\vert_{\partial\Omega}=0, 
 \label{eqn:s4} 
\end{equation}
where $\Omega\subset {\bf R}^2$ is a bounded domain with smooth boundary $\partial\Omega$ and $\nu$ is the outer unit normal vector.  Initial condition is given as 
\begin{equation} 
\left. u\right\vert_{t=0}=u_{0}(x)\geq 0 \quad \mbox{in $\Omega$}, 
 \label{eqn:s2}
\end{equation} 
where $u_0=u_0(x)$ is a smooth function.  

System (\ref{s})-(\ref{eqn:s2}) is subject to thermodynamical laws, {\it total mass conservation} and {\it free energy decreasing}, 
\begin{eqnarray} 
& & \frac{d}{dt}\int_\Omega u=\int_\Omega \nabla\cdot (\nabla u-u\nabla v)=\int_{\partial\Omega}\frac{\partial u}{\partial \nu}-u\frac{\partial v}{\partial\nu} \ ds=0 
 \label{eqn:tmc} \\ 
& & \frac{d}{dt}{\cal F}(u)=-\int_\Omega u\vert \nabla (\log u-v)\vert^2\leq 0  
 \label{eqn:fed}
\end{eqnarray} 
where $ds$ denotes the surface element and 
\begin{equation} 
{\cal F}(u)=\int_\Omega u(\log u-1) \ dx -\frac{1}{2}\langle (-\Delta)^{-1}u,u\rangle 
 \label{eqn:free-e}
\end{equation} 
with $v=(-\Delta)^{-1}u$ standing for (\ref{eqn:s4}).  

A related model arises in the context of chemotaxis in theoretical biology \cite{jl92, nag95}, where the Poisson part is provided with the Neumann boundary condition such as 
\begin{equation}
-\Delta v=u-\frac{1}{\vert\Omega\vert}\int_\Omega u, \quad \left. \frac{\partial v}{\partial \nu}\right\vert_{\partial\Omega}=0, \quad \int_\Omega v=0. 
 \label{eqn:sd} 
\end{equation}
Concerning (\ref{s})-(\ref{eqn:s3}), (\ref{eqn:s2}), and (\ref{eqn:sd}), there is a threshold of $\Vert u_0\Vert_1=\lambda$ for the blowup of the solution.  More precisely, if $\lambda<4\pi$ the solution exists global-in-time \cite{bil98, gz98, nsy97}.  If a local mass greater than $4\pi$ is concentrated on a boundary point, on the contrary, there arises blowup in finite time \cite{nag01, ss01b}. Underlying blowup mechanisms were suspected from the study of stationary solutions \cite{cp81}. This attempt was followed by \cite{hv96, ss00}, using radially symmetric and general stationary solutions, respectively.  Up to now several properties have been known \cite{ss01a, sen07, ns08}.  See our previous work \cite{s14} and the references therein.  System (\ref{s})-(\ref{eqn:s2}), provided with Dirichlet condition for the Poisson part, is taken by \cite{s13}. It excludes boundary blowup points. Here we continue the study \cite{s14} on interior blowup points.  

Fundamental features of system (\ref{s})-(\ref{eqn:s2}) are the following. First, local-in-time unique existence of the classical solution is standard, given smooth initial value $u_0=u_0(x)\geq 0$.  Henceforth, $T\in (0, +\infty]$ denotes its maximal existence time. If $u_0\not\equiv 0$, which we always assume, the strong maximum principle and the Hopf lemma guarantee $u(\cdot,t)>0$ on $\overline{\Omega}$ for $t>0$.  Maximal existence time $T$ of non-stationary solution $u=u(\cdot,t)$, on the other hand, is estimated from below by $\Vert u_0\Vert_\infty$.  Hence $T<+\infty$ implies 
\[ \lim_{t\uparrow T}\Vert u(\cdot,t)\Vert_\infty=+\infty \] 
and the blowup set 
\begin{equation} 
{\cal S}=\{ x_0\in\overline{\Omega} \mid \mbox{$\exists x_k\rightarrow x_0$, $\exists t_k\uparrow T$ such that $u(x_k,t_k)\rightarrow+\infty$}\}  
 \label{eqn:blowupset1}
\end{equation} 
is non-empty.  Since boundary blowup points are excluded \cite{s13}, if $T<+\infty$ in (\ref{s})-(\ref{eqn:s2}) we have 
\begin{equation} 
u(x,t)dx\rightharpoonup \sum_{x_0\in {\cal S}}m(x_0)\delta_{x_0}(dx)+f(x)dx \quad \mbox{in ${\cal M}(\overline{\Omega})=C(\overline{\Omega})'$} 
 \label{eqn:bfq}
\end{equation} 
as $t\uparrow T$.  Here, the blowup set satisfies ${\cal S}\subset \Omega$ with $\sharp {\cal S}<+\infty$, and it holds that $0\leq f=f(x)\in L^1(\Omega)\cap C(\overline{\Omega}\setminus {\cal S})$.  

The blowup mechanism at each inner blowup point, described by \cite{s14}, is more complicated than the ones suspected before. Let $x_0\in{\cal S}$ and $R(t)=(T-t)^{1/2}$. As is shown in \cite{suzuki05, suzuki11} it holds that 
\begin{equation} 
\lim_{b\uparrow+\infty}\limsup_{t\uparrow T}\left\vert \Vert u(\cdot,t)\Vert_{L^1(B(x_0, bR(t))}-m(x_0)\right\vert =0. 
 \label{eqn:ext8}
\end{equation} 
Henceforth, $C_i$, $i=1,2,\cdots, 15$, denote positive constants. 

\begin{thm}[\cite{s14}]
Any $t_k\uparrow T$ admits a sub-sequence, denoted by the same symbol, and $m\in {\bf N}\cup \{0\}$, provided with the following property. Thus, given $0<\varepsilon\ll 1$ and $R\gg 1$, there is $\tilde s\geq 1$ such that, for $t_k'\uparrow T$ defined by $R(t_k')=\tilde sR(t_k)$ we have $0<r^j_k\leq C_1R(t_k')$ and $x^j_k\in B(x_0, C_1R(t_k'))$, $1\leq j\leq m$.  Then, for $B_k^j=B(x_k^j, b_j)$ and $\displaystyle{ G_k=\bigcup_{j=1}^mB_k^j}$, it holds that 
\begin{eqnarray} 
& & B_k^i\cap B_k^j=\emptyset, \quad i\neq j, \ k \gg 1 \nonumber\\ 
& & \limsup_{k\rightarrow\infty}\left\vert \Vert u(\cdot,t_k')\Vert_{L^1(B_k^j)}-8\pi\right\vert <\varepsilon, \quad 1\leq j\leq m \nonumber\\ 
& & \lim_{b\uparrow+\infty}\limsup_{k\rightarrow \infty}R(t_k')^2\Vert u(\cdot,t_k')\Vert_{L^\infty(B(x_0, bR(t_k'))\setminus G_k)}\leq C_2R^{-2}. 
 \label{eqn:1.209}
\end{eqnarray} 
 \label{thm:1.11}
\end{thm}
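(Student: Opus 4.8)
The plan is to perform a parabolic (backward self-similar) rescaling at $x_0$ and to read off a bubble decomposition of the collapsing mass. Introduce $y=(x-x_0)/R(t)$, $s=-\log(T-t)$, and the mass-preserving rescaled density $z(y,s)=R(t)^2u(x_0+R(t)y,t)$, which solves a rescaled Smoluchowski--Poisson flow $z_s=\Delta z-\nabla\cdot(z(\nabla w+\frac12 y))$, $-\Delta w=z$, carrying an additional confining self-similar drift $\frac12 y$. By the mass localization (\ref{eqn:ext8}) of \cite{suzuki05,suzuki11}, the whole quantity $m(x_0)$ is transported into parabolic balls $\{|y|\le b\}$, so the problem reduces to the behaviour of $z(\cdot,s)$ on fixed $y$-balls as $s\to\infty$. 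Transporting the free-energy inequality (\ref{eqn:fed}) to these variables yields a Lyapunov functional that is monotone and, thanks to the localization, bounded; its total dissipation over $s\in(0,\infty)$ is therefore finite. Along the times $s_k=-\log(T-t_k)$ this forces the rescaled flow to be asymptotically stationary, and (after passing to the announced sub-sequence) the measures $z(\cdot,s_k)\,dy$ converge weakly-$\ast$ in ${\cal M}$ to $\sum_{j}\mu_j\delta_{p_j}+\zeta(y)\,dy$ with finitely many atoms $p_j$ and a regular part $\zeta$.

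The quantization $\mu_j=8\pi$ is the next step. An $\varepsilon$-regularity principle for (\ref{s})--(\ref{eqn:s4}) --- subcritical local $L^1$ mass forces local $L^\infty$ boundedness --- shows that $\zeta$ is smooth off the atoms and that each $p_j$ is a genuine concentration point. Rescaling $z$ a second time at the intrinsic scale $r^j_k$ of the $j$-th concentration removes the self-similar drift and leaves, in the limit, an entire solution of the Liouville problem $-\Delta\Gamma=e^{\Gamma}$ on ${\bf R}^2$; the classification of such solutions pins down their total mass as $\int_{{\bf R}^2}e^{\Gamma}=8\pi$, whence $\mu_j=8\pi$ for every $j$. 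Since $m(x_0)<+\infty$, only finitely many atoms occur, which fixes the integer $m$.

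From this data the objects in the statement are built as follows. The factor $\tilde s\ge 1$ serves to select a good rescaled time $s_k'=s_k-2\log\tilde s$, i.e. $R(t_k')=\tilde sR(t_k)$, at which the finite dissipation guarantees that the residual part of $z(\cdot,s_k')$ is already small on $\{|y|\le R\}$ and the bubbles are well separated. I would then set $x^j_k=x_0+R(t_k')\,\xi^j_k$, with $\xi^j_k$ the rescaled concentration centre approximating $p_j$, and take $r^j_k$ the corresponding concentration scale; since all atoms live in a bounded $y$-region, $x^j_k\in B(x_0,C_1R(t_k'))$ and $0<r^j_k\le C_1R(t_k')$. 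Disjointness of the capture balls $B_k^j$ for $k\gg1$ follows from $p_i\ne p_j$: with their radii fixed below half the minimal separation of the $\xi^j_k$, yet still far larger than the relative bubble scales $r^j_k/R(t_k')\to0$, the $B_k^j$ separate while enclosing essentially the full $j$-th bubble, and the mass estimate $|\,\|u(\cdot,t_k')\|_{L^1(B_k^j)}-8\pi\,|<\varepsilon$ is exactly $\mu_j=8\pi$ read back through the scaling.

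The principal obstacle is the residual estimate in the third line of (\ref{eqn:1.209}). Since $R(t_k')^2\|u(\cdot,t_k')\|_{L^\infty}=\|z(\cdot,s_k')\|_{L^\infty}$, after excising the bubbles $G_k$ the rescaled density on $\{|y|\le b\}\setminus G_k$ must be shown to be not merely bounded but $O(R^{-2})$ in $L^\infty$, uniformly as $b\uparrow+\infty$. The difficulty is to preclude concentration at \emph{intermediate} scales, lying between the bubble scales $r^j_k$ and the parabolic scale $R(t_k')$: such neck regions support no atom in the weak limit, yet a priori could still carry a non-negligible $L^\infty$ profile. I would control them by combining the $\varepsilon$-regularity (the local mass over a neck is subcritical, hence the density is locally bounded there) with the far-field decay of the stationary limit, whose residual behaves like $|y|^{-2}$; making this decay uniform in $k$ and matching it at the outer scale $|y|\sim R$ is what produces the factor $R^{-2}$ and forms the technical heart of the argument, which is precisely the residual vanishing announced in the title.
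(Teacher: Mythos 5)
Your opening rescaling is exactly the paper's, but the two pillars your argument rests on have genuine gaps, and they are precisely the points where the paper (following \cite{s14}) uses different machinery. First, the transported free energy is not a usable Lyapunov functional here. In the self-similar variables the functional contains the entropy $\int z\log z$ and the interaction term $-\frac{1}{2}\langle \Gamma\ast z,z\rangle$; for a bubble of mass $\mu$ concentrating at scale $\epsilon$ these contribute $\approx 2\mu\log(1/\epsilon)$ and $\approx -\frac{\mu^2}{4\pi}\log(1/\epsilon)$ respectively, which cancel only when $\mu=8\pi$. So boundedness of your functional presupposes the very quantization you are trying to prove; the argument is circular. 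Second, even granting finite dissipation, ``asymptotically stationary'' is false in the sense you need: blowup for this system is type II, the bubble scales satisfy $r^j_k/R(t_k')\rightarrow 0$, so in the self-similar frame the solution converges weakly to point masses, not to a regular stationary profile, and your second blow-up limit is an \emph{eternal solution of the parabolic equation} (\ref{eqn:entire}). Identifying it with a solution of the elliptic Liouville problem $-\Delta v=e^v$ (to invoke the $8\pi$ classification) requires a stationarity statement that the dissipation argument cannot supply. The paper avoids both issues: quantization comes from the parabolic Liouville property, Proposition \ref{lem:1.2}, valid for entire \emph{weak} solutions with uniformly bounded multiplicate operator (total mass $8\pi$ or $0$), with no stationarity whatsoever.

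The actual derivation of Theorem \ref{thm:1.11} in the paper runs through the measure-valued framework of Section 2: the weak scaling limit $\zeta(dy,s)$ of (\ref{eqn:zeta}) obtained by the compactness of weak solutions (Lemma \ref{pro:1}), the parabolic envelope (\ref{eqn:pe}), the scaling back (\ref{eqn:c9}) to an eternal weak solution $A(dy,s)$ of (\ref{eqn:c7}) with constant mass $m(x_0)$, and then Lions' concentration compactness applied to $A_\ell$ in (\ref{eqn:52}), producing Lemma \ref{lem:3.1}, which is what implies Theorem \ref{thm:1.11}. In particular the $L^\infty$ residual bound in the third line of (\ref{eqn:1.209}) is inherited from (\ref{eqn:209-1}) by undoing the two scalings, not from any far-field decay of a stationary profile matched across ``neck'' regions; this is exactly the step you acknowledge as ``the technical heart'' but do not prove. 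Finally, do not conflate that $L^\infty$ estimate with the residual vanishing of the title: the latter is the $L^1$ statement (\ref{eqn:2.189ih}) of Theorem \ref{thm:2.14ih}, proved separately via Lemma \ref{lem:2.10ih}, and it is the new content of the present paper, whereas Theorem \ref{thm:1.11} is quoted from \cite{s14}.
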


In this paper we show that if (\ref{eqn:1.209}) is extended continuously then it holds that $m(x_0)\in 8\pi {\bf N}$.  Namely, here we assume that any $\varepsilon>0$ admits $m_j(t)\in {\bf N}\cup \{ 0\}$, $x_j(t)\in B(x_0, C_3R(t))$, and $0<r_j(t)\leq C_3R(t)$ for $0<T-t\ll 1$ and $1\leq j\leq m(t)\in {\bf N}\cup \{0\}$, such that, for $B_j(t)=B(x_j(t), r_j(t))$ and $\displaystyle{ G(t)=\bigcup_{j=1}^{m(t)}B_j(t)}$ it holds that 
\begin{eqnarray} 
& & B_i(t)\cap B_j(t)=\emptyset, \quad i\neq j \nonumber\\ 
& & \limsup_{t\uparrow T}\max_{1\leq j\leq m(t)}\left\vert \Vert u(\cdot,t)\Vert_{L^1(B_j(t))}-8\pi\right\vert <\varepsilon \nonumber\\ 
& & \lim_{b\uparrow+\infty}\limsup_{t\uparrow T}R(t)^2\Vert u(\cdot,t)\Vert_{L^\infty(B(x_0, bR(t))\setminus G(t))}\leq C_4.  
 \label{eqn:1.209ih}
\end{eqnarray}  
Then our result arises as follows. 

\begin{thm}
If (\ref{eqn:1.209ih}) holds in (\ref{s})-(\ref{eqn:s2}) with $T<+\infty$, then there is $\tilde t_k\uparrow T$ such that 
\begin{equation} 
\lim_{b\uparrow +\infty}\lim_{k\rightarrow\infty}\Vert u(\cdot,\tilde t_k)\Vert_{L^1(B(x_0, bR(\tilde t_k))\setminus G(\tilde t_k))}<\varepsilon. 
 \label{eqn:2.189ih}
\end{equation} 
Hence we obtain $m(x_0)=8\pi m$, $m\in {\bf N}$, in (\ref{eqn:bfq}), and in particular, $m(t)=m$ in (\ref{eqn:1.209ih}). 
 \label{thm:2.14ih}
\end{thm}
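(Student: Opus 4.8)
The plan is to pass to backward self-similar variables, read off the bubble-plus-residual decomposition from (\ref{eqn:1.209ih}), and then exclude a nontrivial residual by combining a virial (second-moment) identity with a renormalized free-energy dissipation and a rigidity fact for the rescaled stationary equation. Set $y=(x-x_0)/R(t)$, $s=-\log(T-t)$, and $w(y,s)=R(t)^2u(x_0+R(t)y,t)$. A direct change of variables turns (\ref{s})--(\ref{eqn:s4}) into the rescaled Smoluchowski-Poisson system $w_s=\nabla\cdot[\,\nabla w-w\nabla V-\tfrac12\,yw\,]$, $-\Delta V=w$, carrying the anti-confining drift $\tfrac12 y$. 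In these variables each ball $B_j(t)$ becomes a spike of mass $\approx 8\pi$ centered at a bounded point $y_j(s)\in B(0,C_3)$, while on $\{|y|\le b\}$ off the rescaled balls one has $w\le C_4$; by (\ref{eqn:ext8}) the mass of $w$ on $\{|y|\le b\}$ tends to $m(x_0)$ as $b\uparrow\infty$. Writing $\tilde t_k$ for the times with $s_k=-\log(T-\tilde t_k)$, the target (\ref{eqn:2.189ih}) is exactly that the residual mass of $w(\cdot,s_k)$ on $\{|y|\le b\}$, after deleting the spikes, falls below $\varepsilon$ as first $k\to\infty$ and then $b\uparrow\infty$.

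The core is an energy argument. First I would record the rescaled virial identity $\frac{d}{ds}\int|y|^2w=4M-\frac{M^2}{2\pi}+\int|y|^2w$ with $M=\|w\|_1$ the enclosed mass $\to m(x_0)$ (after symmetrization the interaction integrand collapses to a constant, so only the enclosed mass enters); equivalently, for a fixed cutoff $\varphi$ near $x_0$, $I(t)=\int_{\Omega}|x-x_0|^2\varphi\,u\,dx$ obeys $\dot I=4M-\frac{M^2}{2\pi}+(\text{cutoff remainder})$. Next I would relate the two free energies by the exact (localized) bookkeeping $\mathcal F(u(t))=W[w(s)]+\tfrac14\int|y|^2w+m(x_0)\,s$, where $W[w]=\int w(\log w-1)-\tfrac12\int wV-\tfrac14\int|y|^2w$ is the Lyapunov functional of the rescaled flow, with $\frac{dW}{ds}=-\int w|\nabla(\log w-V-\tfrac14|y|^2)|^2\le 0$. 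The decisive rigidity is that a rescaled stationary state satisfies $w=Ce^{V+|y|^2/4}$ on its support, and since $V\sim-\frac{m(x_0)}{2\pi}\log|y|$ at infinity the factor $e^{|y|^2/4}$ destroys integrability: the anti-confining rescaled system admits \emph{no} nontrivial $w\in L^1$ steady state, and this conclusion is insensitive to the spike potential because only the total far-field mass enters. Thus, if after subtracting the $m$ explicit $8\pi$-bubbles the residual free energy is bounded below with integrable dissipation, then along some $s_k\to\infty$ the residual flux tends to $0$, the residual profile converges to a stationary solution, and that solution vanishes—giving the residual below $\varepsilon$ and, with (\ref{eqn:ext8}) and the per-ball mass $\to8\pi$, the quantization $m(x_0)=8\pi m$; the integer- and bounded-valuedness of $m(t)$ then forces $m(t)\equiv m$.

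The step I expect to be the main obstacle is precisely this \emph{renormalization and separation of the residual from the bubbles}. Both $\mathcal F(u(t))$ and $W[w(s)]$ tend to $-\infty$ (the self-energy $-\tfrac12\int wV$ of the collapsing spikes diverges and $-\tfrac14\int|y|^2w$ is not sign-definite), so $W$ cannot be used directly; one must subtract the $s$-dependent free energy of the $m$ Chandrasekhar spikes—carefully tracking their $O(1)$ mutual interactions, since the centers $y_j(s)$ remain inside $B(0,C_3)$—and prove that the remaining residual free energy stays bounded below uniformly in $s$. The difficulty is genuine: the spikes exert a strong attractive drift that could a priori pin residual mass in the inter-spike necks, and the residual is only $O(R(t)^{-2})$ in $L^\infty$, i.e.\ $O(1)$ after rescaling, so its subcriticality relative to the spike cores must be made quantitative and uniform. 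Granting the uniform lower bound, the dissipation identity renders the residual dissipation integrable, the non-existence of integrable steady states closes the argument, and (\ref{eqn:2.189ih}) together with the stated quantization follows.
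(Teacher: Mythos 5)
Your outline reaches the decisive step and then assumes it: the uniform lower bound on the ``renormalized'' free energy after subtracting the $m$ collapsing spikes is exactly what you do not prove, and nothing in the proposal indicates how it could be proven. The difficulty is worse than a missing estimate. The dissipation identity $\frac{dW}{ds}\le 0$ you invoke holds for the \emph{full} rescaled density; there is no evolution equation for ``the residual'' by itself, so ``residual dissipation'', ``residual flux'' and ``convergence of the residual profile to a steady state'' are not defined objects until the spike subtraction has been performed and shown to behave. Since the spike centers stay at mutual distance $O(1)$ inside $B(0,C_3)$, their pairwise interaction energies and their attraction on the residual are of the same order as the quantities you need to control, and the spike self-energies diverge linearly in $s$; your own text concedes this (``granting the uniform lower bound\dots''), so the argument does not close. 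Two further inaccuracies: the bookkeeping identity $\mathcal F(u(t))=W[w(s)]+\tfrac14\int|y|^2w+m(x_0)s$ has the wrong coefficient of $s$ --- scaling the Green's function gives $s\bigl(M-\tfrac{M^2}{8\pi}\bigr)$ with $M=\Vert w\Vert_1$ (which vanishes at $M=8\pi$ and is negative for $M=8\pi m$, $m\ge 2$), plus a term from the regular part of $G$ --- and your final LaSalle step needs compactness of the residual in the presence of moving singular spikes, which is itself a nontrivial claim.

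For contrast, the paper closes the argument with no energy functional at all, and this is precisely how it sidesteps the renormalization you are missing. It first reduces Theorem \ref{thm:2.14ih}, via the concentration-compactness alternative of Lemma \ref{lem:3.1}, to a single incompatibility statement (Lemma \ref{lem:2.10ih}): the weak scaling limit $\zeta(dy,s)$ cannot satisfy both the bubble structure (\ref{eqn:2.200ih}) and the persistent residual bound (\ref{eqn:ext13}). To prove that, it takes a \emph{second} (time-translation) limit $\zeta(dy,s-\tilde s_\ell)\rightharpoonup\tilde\zeta(dy,s)$, after which the approximate decomposition becomes exact: finitely many collapses $8\pi\delta_{y^j_\infty(s)}$ plus a bounded density $g$ with bounded second moment and $\Vert g(\cdot,s)\Vert_1\ge\delta$. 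The spikes then enter only through the \emph{sign} of the flux of $g$ across small circles around them (collisions and absorption beyond $8\pi$ being excluded by Lemma \ref{thm:ss02a}), never through their self-energy. This yields the scalar differential inequality $\partial_s^+B\le B_{rr}-a(r)B_r$, $a(r)=\tfrac r2-C_{13}+\tfrac1r$, for the cumulative radial mass $B(r,s)=\int_0^r r\overline{g}\,dr$, and the anti-confining drift forces exponential decay of the weighted integral $\int_0^\infty e^{-A}B\varphi\,dr$, contradicting the lower bound coming from $\Vert g\Vert_1\ge\delta$ and the parabolic envelope (\ref{eqn:pe}). If you want to salvage your route, you would have to supply the renormalized-energy lower bound with $O(1)$ control of inter-spike interactions, which is an open technical problem rather than a routine verification; the paper's flux/maximum-principle mechanism is the known way around it.
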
 

We call (\ref{eqn:2.189ih}) the {\it residual vanishing}. Although $m(x_0)\in 8\pi {\bf N}$ follows from (\ref{eqn:ext8}), (\ref{eqn:1.209ih}), and (\ref{eqn:2.189ih}), we shall show $m(x_0)\in 8\pi {\bf N}$ first, and then (\ref{eqn:2.189ih}). In future we shall discuss the problems to derive (\ref{eqn:1.209ih}) from (\ref{eqn:1.209}) and also to refine (\ref{eqn:2.189ih}) to 
\[ \displaystyle{ \lim_{b\uparrow+\infty}\limsup_{t\uparrow T}\Vert u(\cdot,t)\Vert_{L^1(B(x_0, bR(t))\setminus G(t))}<\varepsilon}. \] 

This paper is composed of three sections. Taking preliminaries in the next section, we prove Theorem \ref{thm:2.14ih} in the final section. 

\section{Prelimiaries}

Weak solution is a fundamental tool in later arguments. This notion was introduced first for the pre-scaled Smoluchowski-Poisson equation \cite{ss02a}. Let $G=G(x,x')$ be the Green's function to (\ref{eqn:s4}).  Then we say that $0\leq \mu=\mu(dx,t)\in C_\ast([0,T], {\cal M}(\overline{\Omega}))$ is a weak solution to (\ref{s})-(\ref{eqn:s4}) if there is 
\[ 0\leq \nu=\nu(\cdot,t)\in L^\infty_\ast(0,T; {\cal E}') \] 
called multiplicate operator satisfying the following properties, where ${\cal E}$ is the closure of the linear space 
\[ {\cal E}_0=\{ \psi+\rho_\varphi \mid \psi\in C(\overline{\Omega}\times\overline{\Omega}), \ \varphi\in {\cal X} \}, \quad {\cal X}=\{ \varphi\in C^2(\overline{\Omega}) \mid \left. \frac{\partial\varphi}{\partial \nu}\right\vert_{\partial\Omega}=0\} \] 
in $L^\infty(\Omega\times\Omega)$ and $\rho_\varphi(x,x')=\nabla\varphi(x)\cdot \nabla_xG(x,x')+\nabla\varphi(x')\cdot\nabla_{x'}G(x,x')$: 
\begin{enumerate} 
\item For $\varphi\in {\cal X}$ the mapping $t\in [0,T]\mapsto \langle \varphi, \mu(dx,t)\rangle$ is absolutely continuous and there holds 
\begin{equation} 
\frac{d}{dt}\langle \varphi, \mu(dx,t)\rangle=\langle \Delta \varphi, \mu(dx,t) \rangle+\frac{1}{2}\langle \rho_\varphi, \nu(\cdot,t) \rangle_{{\cal E}, {\cal E}'} \quad \mbox{a.e. $t$}. 
 \label{eqn:mul1}
\end{equation} 
\item We have 
\begin{equation} 
\left. \nu(\cdot,t)\right\vert_{C(\overline{\Omega}\times\overline{\Omega})}=\mu(dx,t)\otimes \mu(dx',t) \quad \mbox{a.e. $t$}.  
 \label{eqn:mul2}
\end{equation} 
\end{enumerate} 
Here we confirm that the property $\nu\geq 0$ of $\nu\in {\cal E}'$ means 
\[ \left\vert \langle f, \nu\rangle_{{\cal E}, {\cal E}'}\right\vert\leq \langle g, \nu \rangle \] 
for any $f, g\in {\cal E}$ satisfying $\vert f\vert\leq g$ a.e. in $\Omega\times\Omega$.  

We note the following properties.  First, the total mass conservation of this weak solution  
\[ \mu(\overline{\Omega},t)=\mu(\overline{\Omega}, 0), \quad t\in [0,T]  \] 
is obvious.  Next, this weak solution cannot be a measure-valued solution constructed in \cite{ds09, lsv12, ssv13}.  In fact, any collision of collapses are not admitted here, and more precisely, we have the following property.  
\begin{lem}[\cite{ss02a}] 
If the initial meause $\mu_0(dx)\in {\cal M}(\overline{\Omega})$ admits $x_0\in \Omega$ such that 
\[ \mu_0(\{ x_0\})>8\pi, \quad \lim_{R\downarrow 0}\frac{1}{R^2}\left\langle \vert x-x_0\vert^2\chi_{B(x_0,R)}, \mu_0(dx)\right\rangle =0 \] 
then there is no weak solution to (\ref{s})-(\ref{eqn:s4}) even local-in-time.   \label{thm:ss02a}
\end{lem}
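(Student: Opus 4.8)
The plan is to argue by contradiction: suppose a weak solution $(\mu,\nu)$ exists on $[0,T_0]$ for some $T_0>0$, fix a small $R>0$ with $B(x_0,2R)\subset\Omega$, and track a localized second moment of $\mu$ about $x_0$. Choose a radial $\varphi_R\in{\cal X}$ (a function of $r=|x-x_0|$) with $\varphi_R=|x-x_0|^2$ on $B(x_0,R)$, $\mathrm{supp}\,\varphi_R\subset B(x_0,2R)$, $0\le\varphi_R\le|x-x_0|^2$, $|\nabla\varphi_R|\le C R$ and $\|D^2\varphi_R\|_\infty\le C$; since $\varphi_R$ is supported away from $\partial\Omega$ the Neumann condition defining ${\cal X}$ is automatic. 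Put $I_R(t)=\langle\varphi_R,\mu(dx,t)\rangle\ge0$ and $m_R(t)=\mu(B(x_0,R),t)$. The inequality $0\le\varphi_R\le|x-x_0|^2\chi_{B(x_0,2R)}$ together with the concentration hypothesis gives the crucial initial smallness
\[
\varepsilon_R:=R^{-2}I_R(0)\le R^{-2}\big\langle|x-x_0|^2\chi_{B(x_0,2R)},\mu_0\big\rangle\longrightarrow0\qquad(R\downarrow0),
\]
while $m_R(0)\ge\mu_0(\{x_0\})=:m_0>8\pi$; the same bound shows $\mu(B(x_0,2R)\setminus B(x_0,R),0)\le\varepsilon_R\to0$.

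The engine is the cancellation in $\rho_{\varphi_R}$. Writing $G(x,x')=-\frac{1}{2\pi}\log|x-x'|+K(x,x')$ with $K$ smooth, on $B(x_0,R)\times B(x_0,R)$ where $\varphi_R$ is exactly quadratic one computes
\[
\nabla\varphi_R(x)\cdot\nabla_x\big(-\tfrac{1}{2\pi}\log|x-x'|\big)+\nabla\varphi_R(x')\cdot\nabla_{x'}\big(-\tfrac{1}{2\pi}\log|x-x'|\big)=-\tfrac{1}{\pi},
\]
so the logarithmic singularities cancel and $\rho_{\varphi_R}=-\frac1\pi+O(R)$ is continuous there; this is the virial identity behind the number $8\pi$. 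Feeding $\varphi_R$ into (\ref{eqn:mul1}) and using $\|D^2\varphi_R\|_\infty\le C$, the diffusion term is $\le4m_R+C\mu(B(x_0,2R)\setminus B(x_0,R),t)$, and for the self-interaction $\frac12\langle\rho_{\varphi_R},\nu\rangle$ I would cut $\Omega\times\Omega$ into core$\times$core, core$\times$far, and annulus pieces. On core$\times$core, continuity of $\rho_{\varphi_R}$ and (\ref{eqn:mul2}) yield $\le-\frac{1}{2\pi}m_R^2+O(R)\lambda^2$ with $\lambda=\mu(\overline\Omega,0)$; the annulus pieces are bounded, via $\nu\ge0$ and continuous majorants, by $C\mu(B(x_0,2R)\setminus B(x_0,R),t)$; and the delicate core$\times$far term, where $\nabla_xG$ is of size $R^{-1}$, has leading part the dipole $\int_{B(x_0,R)}(x-x_0)\,d\mu$, which by Cauchy--Schwarz is $\le(m_RI_R)^{1/2}$, so this term is $\le C R^{-1}I_R^{1/2}$. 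Collecting, with $C_5$ depending only on $\Omega$ and $\lambda$,
\[
\frac{d}{dt}I_R\le4m_R-\frac{1}{2\pi}m_R^2+C_5\Big(R+R^{-1}I_R^{1/2}+\mu\big(B(x_0,2R)\setminus B(x_0,R),t\big)\Big)\qquad\text{a.e. }t.
\]

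The contradiction is then produced by a short-time bootstrap. Let $\tau_R$ be maximal so that on $[0,\tau_R]$ one has $I_R\le I_R(0)$, $\mu(B(x_0,2R)\setminus B(x_0,R),t)\le 2\varepsilon_R$, and $m_R\ge m_*:=8\pi+\frac12(m_0-8\pi)$. There the function $m\mapsto4m-\frac{1}{2\pi}m^2$, decreasing for $m>4\pi$, stays below $-\delta:=-\frac{m_*}{2\pi}(m_*-8\pi)<0$ while the error is $C_5(R+\varepsilon_R^{1/2}+2\varepsilon_R)\to0$, so $\frac{d}{dt}I_R\le-\delta/2$; thus $I_R$ strictly decreases (preserving $I_R\le I_R(0)$) and would reach $0$ at some $t_R\le2I_R(0)/\delta=2\varepsilon_RR^2/\delta$. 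To see that the constraints defining $\tau_R$ are not saturated first, I would control the local mass by testing with smooth cutoffs: for $\zeta_R$ equal to $1$ on $B(x_0,R)$ and $0$ off $B(x_0,2R)$, the rate of change of $\langle\zeta_R,\mu\rangle$ is $O(R^{-2})$ times the annulus mass, so over an interval of length $O(\varepsilon_RR^2)$ both $m_R$ and the annulus mass move by $O(\varepsilon_R^2)\to0$. Hence for $R$ small the moment reaches $0$, and then goes negative, strictly before $\tau_R$, contradicting $I_R\ge0$; since $t_R=o(R^2)<T_0$ for small $R$ this contradicts existence on $[0,T_0]$ and rules out even a local-in-time weak solution.

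I expect the main obstacle to be exactly this last coupling: propagating the local-mass lower bound $m_R\ge m_*$ and the annulus-mass smallness together with the moment bound $I_R\le I_R(0)$ on a time interval whose length itself shrinks like the moment, using flux estimates that are only controlled in a time-averaged sense. A second, genuinely technical point is the rigorous evaluation of $\langle\rho_{\varphi_R},\nu\rangle$ from the weak data alone, where one knows only $\nu|_{C(\overline\Omega\times\overline\Omega)}=\mu\otimes\mu$ and $\nu\ge0$: here I must use that $\rho_{\varphi_R}$ is a bounded function whose only diagonal discontinuity is directional rather than logarithmic, hence is squeezed between continuous functions, which is precisely what transfers the sharp core value $-1/\pi$ and controls the remaining pieces by continuous majorants.
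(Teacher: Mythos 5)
Your proposal is correct and follows essentially the same route as the paper's source for this lemma (the paper itself only cites \cite{ss02a} rather than reproving it): the argument there is exactly your localized virial computation, with the cancellation of the logarithmic singularities in $\rho_{\varphi_R}$ yielding the constant $-1/\pi$ on the core, the resulting coefficient $4m-\frac{1}{2\pi}m^2$ explaining the threshold $8\pi$, the positivity $\nu\geq 0$ squeezed between continuous majorants to evaluate the multiplicate term, and the instantaneous contradiction obtained because the moment $I_R(0)=\varepsilon_R R^2$ collapses at rate bounded away from zero on a time interval of length $o(R^2)$, during which local masses can only move by $O(\varepsilon_R)$. The two technical points you flag at the end (the bootstrap coupling and the treatment of the bounded diagonal discontinuity of $\rho_{\varphi_R}$) are precisely the ones handled in \cite{ss02a}, and they close in the way you indicate.
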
 
It is, however, provided with the following property, derived from the fact that ${\cal E}$ is separable.  

\begin{lem}[\cite{ss02a}]
Let $\{ \mu_k(dx,t)\}\subset C_\ast([0,T], {\cal M}(\overline{\Omega}))$ be a sequence of weak solutions to (\ref{s})-(\ref{eqn:s4}). Let the associated multiplicate operator of $\mu_k(dx,t)$ be $\nu_k(\cdot,t)\in L^\infty_\ast(0,T; {\cal E}')$, and assume 
\begin{equation} 
\mu_{k}(\overline{\Omega}, 0)+\sup_{t\in [0,T]}\Vert \nu_k(\cdot,t)\Vert_{{\cal E}'} \leq C_{5}, \quad k=1,2,\cdots. 
 \label{eqn:w-bound}
\end{equation} 
Then we have a subsequence denoted by the same symbol, $\mu(dx,t)\in C_\ast([0,T], {\cal M}(\overline{\Omega}))$, and $\nu(\cdot,t)\in L^\infty_\ast(0, T; {\cal E}')$ such that 
\begin{eqnarray*} 
& & \mu_k(dx,t)\rightharpoonup \mu(dx,t) \quad \mbox{in $C_\ast([0,T], {\cal M}(\overline{\Omega}))$} \\ 
& & \nu_k(\cdot,t)\rightharpoonup \nu(\cdot,t) \quad \qquad \mbox{in $L^\infty_\ast(0,T; {\cal E}')$}  
\end{eqnarray*} 
up to a sub-sequence, and this $\mu(dx,t)$ is a weak solution to (\ref{s})-(\ref{eqn:s4}) with the multiplicate operator $\nu(\cdot,t)$ satisfying 
\[ \mu(\overline{\Omega},0)+\Vert \nu(\cdot,t)\Vert_{{\cal E}'}\leq C_{5}.  \] 
 \label{pro:1}
\end{lem}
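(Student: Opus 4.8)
The plan is to realize the pair $(\mu,\nu)$ as a joint weak-$\ast$ limit of $(\mu_k,\nu_k)$, using two different compactness mechanisms for the two components and then reconciling them in the defining relations \eqref{eqn:mul1} and \eqref{eqn:mul2}. First I would treat the multiplicate operators. Since $\mathcal{E}$ is separable, $L^1(0,T;\mathcal{E})$ is separable and $L^\infty_\ast(0,T;\mathcal{E}')$ is its dual; the uniform bound \eqref{eqn:w-bound} places $\{\nu_k\}$ in a bounded set, and the sequential Banach--Alaoglu theorem yields a subsequence with $\nu_k\rightharpoonup\nu$ weak-$\ast$ in $L^\infty_\ast(0,T;\mathcal{E}')$. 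Weak-$\ast$ lower semicontinuity of the norm gives $\Vert\nu(\cdot,t)\Vert_{\mathcal{E}'}\leq C_5$, and since each $\nu_k(\cdot,t)\geq 0$ the limit inherits $\nu(\cdot,t)\geq 0$ in the sense recalled above.

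For the measures $\mu_k$ I would invoke Arzel\`a--Ascoli. By total mass conservation $\mu_k(\overline\Omega,t)=\mu_k(\overline\Omega,0)\leq C_5$, so all $\mu_k(\cdot,t)$ lie in a fixed weak-$\ast$ compact, metrizable ball $K\subset\mathcal{M}(\overline\Omega)$. Equicontinuity in $t$ follows from the equation: for $\varphi\in\mathcal{X}$, \eqref{eqn:mul1} and \eqref{eqn:w-bound} bound $|\frac{d}{dt}\langle\varphi,\mu_k(dx,t)\rangle|$ by $(\Vert\Delta\varphi\Vert_\infty+\frac12\Vert\rho_\varphi\Vert_{\mathcal{E}})C_5$ uniformly in $k$, so $t\mapsto\langle\varphi,\mu_k(dx,t)\rangle$ is uniformly Lipschitz; a density argument then extends equicontinuity to all $\varphi\in C(\overline\Omega)$ through $|\langle\varphi-\psi,\mu_k(\cdot,t)\rangle|\leq\Vert\varphi-\psi\Vert_\infty C_5$ with $\psi\in\mathcal{X}$. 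Arzel\`a--Ascoli now produces a further subsequence converging uniformly in $t$ to some $\mu\in C_\ast([0,T],\mathcal{M}(\overline\Omega))$, with $\mu(\overline\Omega,0)\leq C_5$.

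It remains to verify that $(\mu,\nu)$ is a weak solution. Writing \eqref{eqn:mul1} in integrated form on $[s,t]$ and passing to the limit, the left-hand side and the drift term $\int_s^t\langle\Delta\varphi,\mu_k\rangle$ converge by the uniform convergence of $\mu_k$ and dominated convergence, while $\int_s^t\langle\rho_\varphi,\nu_k\rangle\,d\tau=\langle\chi_{(s,t)}\rho_\varphi,\nu_k\rangle$ converges by the weak-$\ast$ convergence of $\nu_k$, since $\chi_{(s,t)}\rho_\varphi\in L^1(0,T;\mathcal{E})$. The resulting identity has a bounded integrand, so $t\mapsto\langle\varphi,\mu(dx,t)\rangle$ is absolutely continuous and differentiation recovers \eqref{eqn:mul1}. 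The delicate point, and the step I expect to be the main obstacle, is the compatibility \eqref{eqn:mul2}, which is nonlinear: it demands $\nu|_{C(\overline\Omega\times\overline\Omega)}=\mu(dx,t)\otimes\mu(dx',t)$, whereas weak-$\ast$ convergence of $\mu_k$ need not survive nonlinear operations on the singular part of $\mathcal{E}$. The saving observation is that \eqref{eqn:mul2} constrains only the restriction to continuous $\psi\in C(\overline\Omega\times\overline\Omega)$; for such $\psi$, products $\varphi(x)\phi(x')$ together with Stone--Weierstrass and the uniform mass bound give $\langle\psi,\mu_k(\cdot,t)\otimes\mu_k(\cdot,t)\rangle\to\langle\psi,\mu(\cdot,t)\otimes\mu(\cdot,t)\rangle$ pointwise in $t$. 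Testing $\nu_k\rightharpoonup\nu$ against $g(t)\psi$ with $g\in L^1(0,T)$ and comparing with this pointwise limit through dominated convergence forces $\langle\psi,\nu(\cdot,t)\rangle=\langle\psi,\mu(\cdot,t)\otimes\mu(\cdot,t)\rangle$ for a.e.\ $t$, which is exactly \eqref{eqn:mul2}, closing the proof.
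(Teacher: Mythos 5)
Your proposal is correct and takes essentially the route the paper points to: the paper offers no proof of this lemma, citing \cite{ss02a} with the single remark that the property is ``derived from the fact that ${\cal E}$ is separable,'' and your argument---weak-$\ast$ sequential compactness in $L^\infty_\ast(0,T;{\cal E}')$ from that separability, Arzel\`a--Ascoli for $\{\mu_k\}$ via the uniform Lipschitz bound coming from (\ref{eqn:mul1}) and mass conservation, and the key observation that (\ref{eqn:mul2}) constrains $\nu(\cdot,t)$ only on $C(\overline{\Omega}\times\overline{\Omega})$, where tensor products of weak-$\ast$ convergent measures do converge---is the standard implementation of exactly that compactness scheme. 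The one detail worth making explicit is that the a.e.-$t$ identities you derive for $\nu$ have exceptional sets depending a priori on the test pair $(g,\psi)$, which is removed by running the argument over a countable dense subset of $C(\overline{\Omega}\times\overline{\Omega})$---separability once more.
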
 

We agree with the following notations. First, if $\mu(dx,t)$ has a density as 
\[ \mu(dx,t)=u(x,t)dx, \quad 0\leq u(\cdot,t)\in C([0,T], L^1(\Omega)), \] 
then the multiplicate operator is always taken as 
\[ \nu(\cdot,t)=u(x,t)u(x',t) \ dxdx', \] 
recalling ${\cal E}\subset L^\infty(\Omega\times\Omega)$.  Under this agreement, condition (\ref{eqn:w-bound}) is reduced to 
\begin{equation} 
\mu_{k}(\overline{\Omega},0)\leq C_{6}, \quad k=1,2,\cdots 
 \label{eqn:sk1} 
\end{equation}
if each $\mu_k(dx,t)$ takes density in $[0,T)$ such as 
\[ \mu_k(dx,t)=u_k(x,t)dx, \quad 0\leq u_k=u_k(\cdot,t)\in C([0,T), L^1(\Omega)). \] 
In fact, since we take 
\[ \nu_k(\cdot,t)=u_k(x,t)u_k(x',t)dxdx' \] 
in this case, inequality (\ref{eqn:sk1}) means 
\[ \Vert u_k(\cdot,t)\Vert_1=\Vert u_{k}(\cdot,0)\Vert_1=\mu_{k}(\overline{\Omega},0)\equiv \lambda_k \leq C_{6}. \] 
Therefore, (\ref{eqn:w-bound}) follows with 
\[ \Vert \nu(\cdot,t)\Vert_{{\cal E}'}=\lambda_k^2\leq C_{6}^2. \] 
Using this property, we shall derive a hierarchy of weak solutions in later arguments.  

We can define also the regularity of the weak solution.  First, given $\mu=\mu(\cdot, t)\in {\cal M}(\overline{\Omega})$, we have a unique $v=v(\cdot,t)\in W^{1,q}(\Omega)$, $1\leq q<2$, such that 
\[ -\Delta v=\mu, \quad \left. v\right\vert_{\partial\Omega}=0. \] 
Let $I\subset (0,T)$ be an open interval and $\omega\subset \Omega$ an open set. If the weak solution $\mu(dx,t)$ has a density $u=u(\cdot,t)\in L^p(\omega)$ in $\omega\subset \Omega$, $1<p<\infty$, for $t\in I$, the above $v=v(\cdot,t)$ is in $W^{2,p}_{loc}(\omega)$ from the elliptic regularity. By Sobolev's and Morrey's imbedding theorems, this regularity implies $(u\nabla v)(\cdot,t)\in L^1_{loc}(\omega)$.  Then we assign 
\[ \frac{d}{dt}\langle \varphi, \mu(dx,t)\rangle=\langle \Delta \varphi(dx,t)\rangle+\langle \nabla \varphi\cdot\nabla v, \mu(dx,t)\rangle, \quad \mbox{a.e. $t\in I$} \] 
for any $\varphi\in C_0^2(\omega)$.  In such a case we say that $\mu(dx,t)$ is regular in $\omega\times I$. In Lemma \ref{pro:1}, if $\mu_k(dx,t)$ is regular with the density $u_k(x,t)$ in $\omega\times(0,T)$ satisfying 
\[ \sup_{t\in [0,T]}\Vert u_k(\cdot,t)\Vert_{L^p(\omega)}\leq C_{7} \] 
for $p>1$, then the generated $\mu(dx,t)$ is also regular in $\omega\times (0,T)$.  Conversely, we have the following properties by the $\varepsilon$ regularity \cite{ss02b, s14}.  First, if the weak solution $\mu(dx,t)\in C_\ast([0,T], {\cal M}(\overline{\Omega}))$ is generated by a sequence of classical solutions $\{u_k(x,t)\}$ then its singular part $\mu_s(dx,t)$ is composed of a finite sum of delta functions.  Furthermore, if $\mu(dx,t_0)$, $0<t_0<T$, is regular in the sense of measure in an open set $\hat \omega\subset \Omega$, then it is regular in the above sense.  More precisely, $\mu(dx,t)$ takes a smooth density function $f=f(x,t)$ in $\omega\times (t_0-\delta, t_0+\delta)$, where $\omega$ is an open set satisfying $\overline{\omega}\subset \hat \omega$ and $0<\delta \ll 1$. 

The weak solution $a(dx,t)\in C_\ast(-\infty,+\infty; {\cal M}({\bf R}^2))$ to 
\begin{equation} 
a_t=\Delta a-\nabla\cdot a\nabla\Gamma\ast a \quad \mbox{in ${\bf R}^2\times (-\infty, +\infty)$} 
 \label{eqn:entire}
\end{equation}
is defined similarly, where $\displaystyle{ \Gamma(x)=\frac{1}{2\pi}\log\frac{1}{\vert x\vert}}$ is the fundamental solution to $-\Delta$, ${\cal M}({\bf R}^2)=C_0({\bf R}^2)'$ with 
\[ C_0({\bf R}^2)=\{ f\in C({\bf R}^2\bigcup\{\infty\}) \mid f(\infty)=0 \}, \] 
and ${\bf R}^2\bigcup \{\infty\}$ denotes one-point compactification of ${\bf R}^2$. Then the following property is shown, see \cite{s14}. 

\begin{prop}[Liouville property]
Let $0\leq a=a(dx,t)\in C_\ast((-\infty,+\infty), {\cal M}({\bf R}^2))$ be a weak solution to (\ref{eqn:entire}) with uniformly bounded multiplicate operator. Then we have either $a({\bf R}^2,t)=8\pi$ or $a({\bf R}^2, t)=0$, exclusively in $t\in {\bf R}$. 
 \label{lem:1.2} 
\end{prop}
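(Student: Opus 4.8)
The plan is to run the second-moment (virial) identity, which in the weak formulation (\ref{eqn:mul1}) of the whole-plane problem (\ref{eqn:entire}) takes a strikingly clean form. The starting point is an algebraic observation: if one takes $\varphi(x)=|x|^2$, then $\Delta\varphi\equiv 4$, while the kernel driving the nonlinear term collapses to a constant. Indeed, with $\Gamma(x)=\frac{1}{2\pi}\log\frac{1}{|x|}$ one has $\nabla\Gamma(z)=-\frac{1}{2\pi}\frac{z}{|z|^2}$, and since $\nabla_{x'}\Gamma(x-x')=-\nabla\Gamma(x-x')$,
\[ \rho_\varphi(x,x')=\nabla\varphi(x)\cdot\nabla\Gamma(x-x')-\nabla\varphi(x')\cdot\nabla\Gamma(x-x')=2(x-x')\cdot\nabla\Gamma(x-x')=-\frac{1}{\pi}. \]
Because $\rho_\varphi\equiv-1/\pi$ is a bounded continuous, hence admissible, element of ${\cal E}$, property (\ref{eqn:mul2}) gives $\langle\rho_\varphi,\nu(\cdot,t)\rangle=-\frac{1}{\pi}\lambda^2$ with $\lambda=a({\bf R}^2,t)$. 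Writing $M(t)=\langle |x|^2, a(dx,t)\rangle$, the weak formulation then formally yields
\[ \frac{d}{dt}M(t)=\langle\Delta\varphi,a\rangle+\tfrac{1}{2}\langle\rho_\varphi,\nu\rangle=4\lambda-\frac{\lambda^2}{2\pi}=\frac{\lambda}{2\pi}\,(8\pi-\lambda). \]

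Granting this identity, I would first record total mass conservation: taking $\varphi\equiv 1$ gives $\Delta\varphi=0$ and $\rho_\varphi=0$, so $\frac{d}{dt}\langle 1,a\rangle=0$, the only content being to exclude flux of mass to or from spatial infinity, which the uniformly bounded multiplicate operator supplies. Thus $\lambda$ is a genuine constant in $t$. The conclusion is then forced by the sign structure together with eternity of the solution. Since $M(t)\geq 0$ for every $t\in{\bf R}$ while $\frac{d}{dt}M$ equals the constant $\frac{\lambda}{2\pi}(8\pi-\lambda)$: if $0<\lambda<8\pi$ then $M$ grows linearly and $M(t)\to-\infty$ as $t\to-\infty$, a contradiction; if $\lambda>8\pi$ then $M$ decreases linearly and $M(t)\to-\infty$ as $t\to+\infty$, again impossible. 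Hence $\frac{d}{dt}M\equiv 0$, i.e. $\lambda\in\{0,8\pi\}$, and by conservation the same alternative holds for all $t$, which is exactly the assertion of Proposition \ref{lem:1.2}.

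The main obstacle is that $\varphi(x)=|x|^2$ is not admissible — it is unbounded, lies outside the class underlying (\ref{eqn:mul1}), and a priori $M(t)$ need not even be finite. I would therefore carry out the computation with truncations $\varphi_R(x)=|x|^2\zeta(x/R)$ for a fixed $\zeta\in C_0^\infty$ equal to $1$ on the unit ball, producing two error contributions, both supported in the far annulus $R\le|x|\le 2R$: the correction of $\Delta\varphi_R$ to the value $4$, and the deviation of $\rho_{\varphi_R}$ from the exact constant $-1/\pi$. The crux is to show these vanish as $R\to\infty$ uniformly in $t$. This is precisely where the uniformly bounded multiplicate operator enters: it controls the double-integral drift term on far-field annuli and furnishes, together with the a priori $\nu$-bound of Lemma \ref{pro:1}, the tightness needed to pass to the limit in the truncated identities.

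The most delicate point is the possibility $M(t)\equiv+\infty$, where the linear-growth argument has no direct meaning. I would handle it by working throughout with the monotone quantities $M_R(t)\in[0,\,(2R)^2\lambda]$, showing that $\frac{d}{dt}M_R(t)\to\frac{\lambda}{2\pi}(8\pi-\lambda)$ locally uniformly in $t$, and then deriving the dichotomy from the boundedness of $M_R$ on each fixed scale against a nonzero limiting growth rate; alternatively, one argues that finiteness of the second moment at a single time propagates. Establishing this uniform far-field decay — equivalently, ruling out a slow spreading tail carrying second moment to infinity — is the one step I expect to require genuine work beyond the formal identity, and it is the place where the structural hypothesis of a uniformly bounded multiplicate operator is indispensable.
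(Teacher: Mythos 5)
The paper does not actually prove Proposition \ref{lem:1.2}; it imports it from \cite{s14}, and the argument there is exactly the virial computation you set up. Your core algebra is correct: for $\varphi(x)=|x|^2$ one has $\rho_\varphi\equiv-1/\pi$, property (\ref{eqn:mul2}) turns $\langle\rho_\varphi,\nu\rangle$ into $-\lambda^2/\pi$, and the weak form gives $\frac{d}{dt}\langle|x|^2,a(dx,t)\rangle=\frac{\lambda}{2\pi}(8\pi-\lambda)$, which together with eternity and positivity of the second moment forces $\lambda\in\{0,8\pi\}$. So the route is the right one.

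There is, however, a genuine gap at precisely the point you flag at the end, and neither of your two proposed remedies closes it. ``Finiteness of the second moment at one time propagates'' is vacuous when $M(t)=+\infty$ for \emph{every} $t$, which the hypotheses do not exclude (a density with an $|x|^{-4}$ tail has finite mass and infinite second moment). And the endgame ``boundedness of $M_R$ at fixed scale against a nonzero limiting growth rate'' hides a scale/time tension: the convergence $\frac{d}{dt}M_R\to\frac{\lambda}{2\pi}(8\pi-\lambda)$ is locally uniform in $t$ only, so to exploit the identity on a time interval of length $L$ you must take $R\geq R_0(L)$, and then the trivial bound $M_{R_0(L)}\leq CR_0(L)^2\lambda$ says nothing against the linear gain $cL$ unless you control how $R_0(L)$ grows; qualitative tightness (which does follow from mass conservation plus weak-$\ast$ continuity) gives no rate. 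What closes the gap is a \emph{quantitative} far-field mass-flux estimate, and this is where the bounded multiplicate operator and positivity of $\nu$ really enter: testing (\ref{eqn:mul1}) with a cutoff $\psi_R$ vanishing on $B_{R/2}$ and equal to $1$ off $B_R$, the mean-value bound $|\rho_{\psi_R}|\leq\frac{1}{2\pi}\Vert\nabla^2\psi_R\Vert_\infty\leq CR^{-2}$ yields $|\frac{d}{dt}\langle\psi_R,a\rangle|\leq C(\lambda+\lambda^2)R^{-2}$, hence $a(\{|x|\geq R\},t)\leq a(\{|x|\geq R/2\},0)+C|t|R^{-2}$. Consequently the error in the truncated virial identity on $[0,L]$ (or $[-L,0]$) is at most $C(o_R(1)L+L^2R^{-2})$, so choosing $R\sim\sqrt{L}$ gives $M_R(0)\geq \frac{c}{2}L\geq c'R^2$ in either sign case; this contradicts the elementary fact that for the fixed finite measure $a(dx,0)$ one has $M_R(0)=o(R^2)$ as $R\to\infty$. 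Without this (or an equivalent) quantitative step, the infinite-second-moment case — the one you yourself single out — remains open, so as written the proposal is an incomplete sketch of the correct proof.
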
 

We can also define the weak solution $\zeta(dy,s)\in C_\ast(-\infty,+\infty; {\cal M}({\bf R}^2))$ to 
\begin{equation} 
\zeta_s=\Delta \zeta-\nabla\cdot \zeta\nabla(\Gamma\ast \zeta+\vert y\vert^2/4) \quad \mbox{in ${\bf R}^2\times (-\infty, +\infty)$},   
 \label{eqn:wsl1}
\end{equation} 
which arises as the weak scaling limit of $u=u(x,t)$.  Thus, given $x_0\in{\cal S}\subset \Omega$, we take the backward self-similar transformation 
\[ z(y,s)=(T-t)u(x,t), \quad y=(x-x_0)/(T-t)^{1/2}, \quad s=-\log(T-t). \] 
Let $t_k\uparrow+\infty$ and put 
\[ s_k=-\log (T-t_k) \ \uparrow+\infty. \] 
Then, passing to a sub-sequence denoted by the same symbol, we have 
\begin{equation} 
z(y, s+s_k)dy\rightharpoonup \zeta(dy,s) \quad \mbox{in $C_\ast(-\infty, +\infty; {\cal M}({\bf R}^2)$}), 
 \label{eqn:zeta}
\end{equation} 
where $\zeta(dy,s)$ is weak solution to (\ref{eqn:wsl1}) provided with a uniformly bounded multiplicate operator.  In (\ref{eqn:zeta}), the important property called {\it parabolic envelope} arises as 
\begin{equation} 
\zeta({\bf R}^2, s)=m(x_0)>0, \quad \langle \vert y\vert^2, \zeta(dy,s)\rangle \leq C_{8}
 \label{eqn:pe}
\end{equation} 
valid to $s\in (-\infty, +\infty)$ with $m(x_0)>0$ defined by (\ref{eqn:bfq}), see \cite{suzuki05, s13}. 

Here we take the {\it scaling back} of $\zeta(dy,s)$, defined by the transformation 
\begin{equation} 
A(dy', s')=e^s\zeta(dy,s), \quad y'=e^{-s/2}y, \quad s'=-e^{-s}. 
 \label{eqn:c9}
\end{equation} 
It has an extension as $0\leq A=A(dy, s)\in C_\ast((-\infty, 0], {\cal M}({\bf R}^2))$ with $A(dy,0)=m(x_0)\delta_0(dy)$. It becomes also a weak solution to 
\begin{equation} 
A_s=\Delta A-\nabla\cdot A\nabla\Gamma\ast A \quad \mbox{in ${\bf R}^2\times (-\infty, 0)$} 
 \label{eqn:c7}
\end{equation}  
satisfying 
\begin{equation} 
A({\bf R}^2, s)=m(x_0), \quad -\infty<s<0
 \label{eqn:c8}
\end{equation}  
with a uniformly bounded multiplicate operator. 

Now, given $\tilde s_\ell\uparrow +\infty$, we take 
\begin{equation} 
A_\ell(dy)=A(dy,-\tilde s_\ell)/m(x_0) 
 \label{eqn:52}
\end{equation} 
to apply {\it concentration compactness principle} \cite{lions84} (see also p. 39 of \cite{struwe}).  Then we obtain the following lemma, which implies Theorem \ref{thm:1.11}. 

\begin{lem}[concentration compactness]
Passing to a sub-sequence we have $m\in {\bf N}\cup\{0\}$ such that any $\varepsilon>0$ admits $y_\ell^j\in {\bf R}^2$ and $b_j>0$, $1\leq j\leq m$, satisfying 
\begin{eqnarray} 
& & \lim_{\ell\rightarrow\infty}\vert y^i_\ell-y^j_\ell\vert=+\infty, \quad \forall i\neq j \label{eqn:208-1} \\ 
& & \limsup_{\ell\rightarrow \infty}\vert A_\ell (B_\ell^j))-8\pi\vert <\varepsilon, \quad \forall j \label{eqn:208-2} \\ 
& & \vert y^j_\ell\vert \leq C_{9}(1+\max_j b_j)\tilde s_\ell^{1/2}, \quad \forall \ell\gg 1, \ \forall j \label{eqn:211}
\end{eqnarray} 
for $B_\ell^j=B(y_\ell^j, b_j)$.  Furthermore, there arises one of the following alternatives. 
\begin{enumerate}
\item $m(x_0)>8\pi m+\varepsilon$ and $A_\ell$, $\ell\gg 1$, is regular in $\displaystyle{{\bf R}^2\setminus \bigcup_{j=1}^mB(y_\ell^j, b_j)}$.  It holds that 
\begin{eqnarray} 
& & \liminf_{\ell\rightarrow\infty}A_\ell\left( {\bf R}^2\setminus \bigcup_{\ell=1}^mB_\ell^j\right)\geq m(x_0)-8\pi m-\varepsilon \label{eqn:209-2} \\ 
& & \lim_{\ell\rightarrow\infty}\Vert A_\ell\Vert_{L^\infty({\bf R}^2\setminus \bigcup_{j=1}^mB_\ell^j)}=0.  \label{eqn:209-1}
\end{eqnarray} 
\item $m(x_0)=8\pi m$ and 
\begin{equation} 
\limsup_{\ell\rightarrow\infty}A_\ell\left( {\bf R}^2\setminus \bigcup_{j=1}^mB(y_\ell^j, b_j)\right)<\varepsilon. 
 \label{eqn:210}
\end{equation} 
\end{enumerate} 
 \label{lem:3.1}
\end{lem}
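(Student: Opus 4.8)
The plan is to apply the concentration-compactness principle of Lions to the measures $A_\ell$ defined in (\ref{eqn:52}), and then to upgrade the resulting static spatial decomposition into a mass-quantized one by invoking the Liouville property, Proposition \ref{lem:1.2}. The two alternatives in the conclusion will emerge from a final analysis of the residual mass once the maximal number of concentration profiles has been extracted.

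First I would introduce the L\'evy concentration function $Q_\ell(r)=\sup_{y\in {\bf R}^2}A_\ell(B(y,r))$ and run the standard vanishing/dichotomy/compactness trichotomy. Iterating the dichotomy step produces a finite family of concentration profiles: at the $j$-th step one selects centers $y_\ell^j$ realizing (up to a fixed deficit) the supremum of the concentration function, translates $A_\ell(\cdot+y_\ell^j)$, and extracts a weak-$\ast$ limit. The separation property (\ref{eqn:208-1}) is built into this selection, since a new profile is split off only after the mass already captured has been subtracted, forcing $|y_\ell^i-y_\ell^j|\to+\infty$ for $i\neq j$. The process terminates after finitely many steps, yielding $m\in {\bf N}\cup\{0\}$, because each profile carries a fixed quantum of mass while the total mass is finite and, for the underlying solution, conserved by (\ref{eqn:c8}).

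The crucial step is to identify each profile's mass as exactly $8\pi$, and this is where the single time-slice picture must be promoted to a space--time one. For a fixed profile $j$ I would translate and time-shift the full solution, considering $A(dy+y_\ell^j,\,s-\tilde s_\ell)$ on ${\bf R}^2\times(-\infty,+\infty)$. Since $A$ solves the entire equation (\ref{eqn:c7}) with a uniformly bounded multiplicate operator and the quadratic potential of (\ref{eqn:wsl1}) is already absent here, the compactness Lemma \ref{pro:1} applies: passing to a subsequence, these recentred, time-shifted solutions converge to a weak solution $a^j(dy,s)$ of (\ref{eqn:entire}) on the whole line, again with uniformly bounded multiplicate operator. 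The Liouville property, Proposition \ref{lem:1.2}, then forces $a^j({\bf R}^2,s)\in\{0,8\pi\}$; since the profile was produced by a nonvanishing concentration, its mass must be $8\pi$, which gives (\ref{eqn:208-2}). I expect this promotion --- verifying that the recentred, time-shifted solutions form an admissible sequence for Lemma \ref{pro:1} and that the limit is a genuine, nontrivial entire weak solution rather than a frozen time-slice --- to be the main obstacle.

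It remains to establish the localization (\ref{eqn:211}) and the final dichotomy. For (\ref{eqn:211}) I would feed the second-moment bound of the parabolic envelope (\ref{eqn:pe}), namely $\langle|y|^2,\zeta(dy,s)\rangle\leq C_8$, through the scaling-back (\ref{eqn:c9}); this controls the spatial spread of $A(\cdot,-\tilde s_\ell)$ and prevents a profile from escaping faster than the parabolic rate $\tilde s_\ell^{1/2}$, yielding $|y_\ell^j|\leq C_9(1+\max_j b_j)\tilde s_\ell^{1/2}$. For the alternatives, I would analyse the residual $A_\ell({\bf R}^2\setminus\bigcup_j B_\ell^j)$: by maximality of $m$ no further mass can concentrate, so either the residual vanishes in the limit, giving $m(x_0)=8\pi m$ together with (\ref{eqn:210}) as in alternative 2, or it retains positive mass while spreading out, in which case the absence of concentration combined with the $\varepsilon$-regularity theorem forces regularity of $A_\ell$ off the balls and $\|A_\ell\|_{L^\infty({\bf R}^2\setminus\bigcup_j B_\ell^j)}\to0$, together with the mass lower bound, which is exactly alternative 1 with (\ref{eqn:209-2})--(\ref{eqn:209-1}).
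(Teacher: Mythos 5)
Your proposal is correct and follows essentially the approach the paper intends: the paper itself contains no proof of this lemma --- it is stated as the outcome of applying Lions' concentration-compactness principle \cite{lions84} to the measures $A_\ell$ of (\ref{eqn:52}), with the details deferred to \cite{s14} --- and your reconstruction assembles precisely the toolkit the paper stages for this purpose in Section 2. In particular, extracting profiles by the concentration function, promoting the time slice $s=-\tilde s_\ell$ to a space-time limit via translation/time-shift of $A$ and the ${\bf R}^2$-analogue of Lemma \ref{pro:1}, invoking the Liouville property (Proposition \ref{lem:1.2}) for the $8\pi$ quantization in (\ref{eqn:208-2}), using the parabolic envelope (\ref{eqn:pe}) scaled back through (\ref{eqn:c9}) for (\ref{eqn:211}), and settling the two alternatives by mass accounting plus $\varepsilon$-regularity is evidently the intended argument, and you correctly identify the static-to-space-time promotion as the step demanding the most care.
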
 

\section{Proof of Theorem \ref{thm:2.14ih}} 

From the assumption (\ref{eqn:1.209ih}), $\zeta(dy,s)$ generated in the previous section satisfies an additional condition.  Namely, each $0<\varepsilon\ll 1$ admits $s_1\gg 1$ provided with the following properties.  First, for $\tilde s\geq s_1$ there are $m(\tilde s)\in {\bf N}\cup \{0\}$, $y_j(\tilde s)\in {\bf R}^2$, and $b_j(\tilde s)>0$, $1\leq j\leq m(\tilde s)$, such that $\vert y_j(\tilde s)\vert\leq C_{10}\tilde s^{1/2}$ and $b_j(\tilde s)\leq C_{10}$. Next, $\zeta(dy,-\log \tilde s)$ is regular in $\displaystyle{ \bigcup_{\tilde s\geq \tilde s_1}({\bf R}^2\setminus E_{\tilde s})\times \{ -\log \tilde s\}}$ for 
\begin{equation} 
 E_{\tilde s}=\bigcup_{j=1}^{m(\tilde s)}B_j(\tilde s), \quad \tilde B_j(\tilde s)=B(\tilde s^{-1/2}y_j(\tilde s), \tilde s^{-1/2}b_j(\tilde s)). 
 \label{eqn:ext14}
\end{equation} 
Finally, it holds that $B_i(\tilde s)\cap B_j(\tilde s)=\emptyset$, $i\neq j$, and 
\begin{eqnarray} 
& & \sup_{\tilde s\geq s_1}\Vert \zeta(dy, -\log \tilde s)\Vert_{L^\infty({\bf R}^2\setminus E_{\tilde s})}\leq C_{11} \nonumber\\ 
& & \sup_{\tilde s\geq s_1, \ 1\leq j\leq m(\tilde s)}\left\vert \zeta(B_j(\tilde s), -\log \tilde s)-8\pi\right\vert <\varepsilon. 
 \label{eqn:2.200ih}
\end{eqnarray} 

If $m(x_0)\not\in 8\pi {\bf N}$, we have always the first alternative in Lemma \ref{lem:3.1}, which implies the existence of $\delta>0$ such that 
\begin{equation} 
\inf_{s\geq \tilde s_1}\zeta ({\bf R}^2\setminus E_{\tilde s}, -\log \tilde s)\geq \delta. 
 \label{eqn:ext13}
\end{equation} 
If (\ref{eqn:ext13}) is not the case there is $s_2\geq s_1$ such that 
\[ \zeta ({\bf R}^2\setminus E_{s_2}, -\log s_2) <\varepsilon. \] 
Then it holds that (\ref{eqn:2.189ih}) with some $\tilde t_k\uparrow T$.  Hence Theorem \ref{thm:2.14ih} is reduced to the following lemma.  For the proof we use (\ref{eqn:wsl1}), particularly, the term $\vert y\vert^2/4$, which attract the density of $\zeta(dy,s)$ to $\vert y\vert=\infty$.  

\begin{lem}
The weak scaling limit $\zeta(dy,s)\in C_\ast(-\infty, +\infty; {\cal M}({\bf R}^2))$, generated by (\ref{eqn:zeta}), does not satisfy (\ref{eqn:2.200ih}) and (\ref{eqn:ext13}), simultaneously. 
 \label{lem:2.10ih}
\end{lem}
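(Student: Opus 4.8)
The plan is to argue by contradiction: assume $\zeta(dy,s)$ satisfies both (\ref{eqn:2.200ih}) and (\ref{eqn:ext13}), and exploit the second moment $I(s)=\langle |y|^2,\zeta(dy,s)\rangle$, which is exactly the quantity on which the confining term $|y|^2/4$ in (\ref{eqn:wsl1}) acts. Testing the weak form of (\ref{eqn:wsl1}) against $\varphi=|y|^2$, I would first derive the closed identity
\[ \frac{d}{ds}I(s)=\langle \Delta\varphi,\zeta\rangle+\frac{1}{2}\langle \rho_\varphi,\nu\rangle+\Big\langle \nabla\varphi\cdot\frac{y}{2},\zeta\Big\rangle=4m(x_0)-\frac{m(x_0)^2}{2\pi}+I(s). \]
The three contributions are transparent. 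First, $\langle \Delta\varphi,\zeta\rangle=4m(x_0)$. Second, since $\nabla\varphi(y)-\nabla\varphi(y')=2(y-y')$ gives $\rho_\varphi=2(y-y')\cdot\nabla\Gamma(y-y')\equiv-1/\pi$, a continuous (indeed constant) symbol, only the restriction of $\nu$ to continuous functions, namely $\zeta\otimes\zeta$, enters, and it produces $-m(x_0)^2/(2\pi)$; this is the one delicate bookkeeping point, and it is clean precisely because $\rho_\varphi$ is constant, so no diagonal part of $\nu$ is seen. Third, the confinement term gives $\langle \nabla\varphi\cdot(y/2),\zeta\rangle=I(s)$. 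The unbounded test function $|y|^2$ is handled by cutoffs, justified by the envelope bound (\ref{eqn:pe}).

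Since (\ref{eqn:pe}) bounds $I(s)\le C_{8}$ for every $s\in\mathbb{R}$, the homogeneous part $Ke^{s}$ of the solution of $I'=I+c$ with $c=4m(x_0)-m(x_0)^2/(2\pi)$ must vanish, so $I(s)\equiv -c=\frac{m(x_0)}{2\pi}(m(x_0)-8\pi)$ is a nonnegative constant. This already forces $m(x_0)\ge 8\pi$, and in the pure–residual case $m=0$ (so $0<m(x_0)<8\pi$) it is an immediate contradiction: then $c>0$, and the outward drift of $|y|^2/4$ would drive $I$ to $+\infty$, which the envelope forbids.

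The genuinely delicate case is $m\ge 1$, where the $m$ collapses of mass $\simeq 8\pi$ located in $B(0,C_{10})$ make the global constant satisfy $c\le 0$, rendering the global identity inconclusive. Here I would localize. By (\ref{eqn:2.200ih}) the residual part has density bounded by $C_{11}$ and carries mass $r=m(x_0)-8\pi m\in(0,8\pi)$, while (\ref{eqn:ext13}) keeps $r\ge\delta$. The outward drift $y/2$ from $|y|^2/4$ pushes this residual away from the collapses, whereas the bound $I\le C_{8}$ forbids it from escaping to $|y|=\infty$. Following the residual with a moving, suitably rescaled frame and invoking the compactness Lemma \ref{pro:1} with the uniform multiplicate–operator bound, I would extract a nontrivial entire weak solution $a(dy,t)$ of the confinement–free equation (\ref{eqn:entire}) carrying a mass in $(0,8\pi)$. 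The Liouville property, Proposition \ref{lem:1.2}, then forces that mass into $\{0,8\pi\}$, the desired contradiction. Equivalently, once decoupled from the collapses the residual has its own second moment obeying $I_r'\simeq I_r+4r\,(1-r/8\pi)$ with $r<8\pi$, so the strictly positive production constant sends $I_r\to\infty$, again contradicting the envelope.

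I expect the main obstacle to be exactly this decoupling: showing that the cross–interaction between the residual and the $8\pi$–collapses becomes asymptotically negligible, so that the extracted limit genuinely solves (\ref{eqn:entire}) with uniformly bounded multiplicate operator and hence falls under Proposition \ref{lem:1.2} (or, in the localized form, so that the residual moment truly sees only its own sub‑critical self‑interaction). The role of the term $|y|^2/4$ is precisely to produce the outward separation that makes the cross term vanish, while the a priori second–moment bound (\ref{eqn:pe}) closes the contradiction by leaving the sub‑threshold residual nowhere to go.
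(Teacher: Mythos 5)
Your opening virial computation is correct as far as it goes: testing (\ref{eqn:wsl1}) with $\varphi=|y|^2$, using that $\rho_\varphi\equiv -1/\pi$ is continuous so that only $\zeta\otimes\zeta$ enters, and invoking boundedness of $I(s)$ on all of ${\bf R}$ from (\ref{eqn:pe}), does force $I(s)\equiv\frac{m(x_0)}{2\pi}(m(x_0)-8\pi)$ and hence $m(x_0)\geq 8\pi$. But your case split is then miscalibrated: $m=0$ does \emph{not} imply $m(x_0)<8\pi$ (with no collapses, the entire mass $m(x_0)$, which could be arbitrarily large, may sit in the bounded-density residual), so what your identity actually excludes is only $m(x_0)<8\pi$. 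The entire content of the lemma --- a residual of mass at least $\delta$ coexisting with the collapses --- therefore rests on your second step, and that step is not a proof: you yourself flag the ``decoupling'' of the residual from the $8\pi$-collapses as the main obstacle and do not supply it. Worse, the decoupling as you describe it cannot be realized: by (\ref{eqn:2.200ih}) the collapse centers satisfy $\tilde s^{-1/2}|y_j(\tilde s)|\leq C_{10}$, i.e.\ in self-similar coordinates the atoms stay in a \emph{bounded} region, while the envelope bound (\ref{eqn:pe}) prevents the residual from moving off to $|y|=\infty$; so no moving frame can spatially separate the residual from the collapses, the extracted limit in your scheme risks being trivial (mass $0$, which Proposition \ref{lem:1.2} allows), and in any localized virial identity the attraction exerted by the $8\pi$ atoms on the residual is an $O(1)$ term with the wrong sign that you cannot discard. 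Your heuristic $I_r'\simeq I_r+4r(1-r/8\pi)$ is exactly the statement that needs proof, and it is false as an identity because of these cross terms.

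The paper closes precisely this case by a different, local mechanism that never needs Proposition \ref{lem:1.2}. It takes a further time-shift limit $\tilde\zeta(dy,s)$ of $\zeta(dy,s-\tilde s_\ell)$, shows its singular part consists of finitely many non-colliding atoms of coefficient exactly $8\pi$ (via Lemma \ref{thm:ss02a} applied to the scaling back), so that the residual density $g$ solves (\ref{eqn:1.229}) away from the atoms. The cross-interaction with the collapses is then handled not by decoupling but by a \emph{sign} argument: excising small balls around the atoms, the flux of $g$ into those balls has a favorable sign and can simply be dropped, yielding the closed radial inequality $\partial_s^+B\leq B_{rr}-a(r)B_r$ for the cumulative mass $B(r,s)=\int_0^r r\overline{g}\,dr$, with $a(r)=\frac{r}{2}-C_{13}+\frac{1}{r}$ absorbing the bounded self-interaction drift $\|\nabla\Gamma\ast g\|_\infty$. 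A weighted functional $\int e^{-A}B\varphi\,dr$ then decays exponentially in $s$, contradicting the persistence bound $B\geq\delta/2$ that follows from (\ref{eqn:ext13}) and the second-moment bound. In short: your subcritical-mass observation is sound, but the lemma's essential case is missing, and the route you propose for it runs into a concrete obstruction that the paper's flux-sign/weighted-mass argument is specifically designed to avoid.
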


\begin{proof} 
Similarly to Lemma \ref{pro:1} concerning (\ref{s})-(\ref{eqn:s4}), given $\tilde s_\ell\uparrow+\infty$, we have a sub-sequence denoted by the same symbol such that 
\begin{equation} 
\zeta(dy,s-\tilde s_\ell)\rightharpoonup \tilde \zeta(dy,s) \quad \mbox{in $C_\ast(-\infty, +\infty; {\cal M}({\bf R}^2))$}. 
 \label{eqn:2.204ih}
\end{equation} 
This $\tilde \zeta(dy,s)$ is a weak solution to (\ref{eqn:wsl1}). Furthermore, since $\{ \zeta(dy,s-\tilde s_\ell)\}$ is tight by (\ref{eqn:pe}), it holds that 
\begin{equation} 
\tilde \zeta({\bf R}^2,s)=m(x_0), \quad \langle \vert y\vert^2, \tilde \zeta(dy,s)\rangle \leq C_8. 
 \label{eqn:1.227-2}
\end{equation} 
We have also 
\[ \tilde s^{-1/2}\vert y_j(\tilde s)\vert\leq C_{10}, \quad \lim_{\tilde s\uparrow+\infty}\tilde s^{-1/2}b_j(\tilde s)=0 \] 
in (\ref{eqn:ext14}). 

Similarly to the remark after Lemma \ref{pro:1}, the singular part of $\tilde \zeta(dy,s)$, $s\in {\bf R}$, denoted by $\tilde \zeta_s(dy,s)$, is composed of a finite sum of delta fucntions.  By applying Lemma \ref{thm:ss02a} to the scaling back $\tilde A(dy,s)$ of $\zeta(dy,s)$, defined as in (\ref{eqn:c9}), the coefficient of each delta function of $\tilde \zeta_s(dy,s)$ must be less than or equal to $8\pi$.  These properties guarantee that the singular support of $\tilde \zeta(dy,s)$, denoted by ${\cal S}_s$, is composed of a finite number of collisionless accumulating points of $\{ \tilde s^{-1/2}y_j(\tilde s) \mid 1\leq j\leq m(\tilde s)\}$ defined for $-\log\tilde s=s+\tilde s_\ell$ as $\ell \rightarrow\infty$.  We may assume also that ${\cal S}_s$, $s\in {\bf Q}$, is composed of their converging points  by a diagonal argument. 

Therefore, we have $m(s)\in {\bf N}\cup\{ 0\}$ and $y^j_\infty(s)\in {\bf R}^2$, $1\leq j\leq m(\tilde s)$,  such that 
\[ \displaystyle{ {\cal S}_s=\{ y^j_\infty(s) \mid 1\leq j \leq m(s)\}}, \quad \tilde \zeta(\{ y^j_\infty(s)\}, s)=8\pi \] 
for $s\in {\bf R}$.  Furthermore, the density function $g=g(y,s)$ of the absolutely continuous part of $\tilde \zeta(dy,s)$ is provided with the properties 
\begin{eqnarray} 
& & 0\leq g=g(y,s) \leq C_{11}, \ \int_{{\bf R}^2}\vert y\vert^2g(y, s) \ dy \leq C_8, \ \Vert g(\cdot, s)\Vert_1\leq m(x_0) \nonumber\\ 
& & \Vert g(\cdot,s)\Vert_1\geq \delta, \ s\in {\bf Q}. 
 \label{eqn:38}
\end{eqnarray} 
It holds also that ${\cal S}_s\subset B_R$ for any $R>C_{10}$.  

Since $\tilde \zeta(dy,s)\in C_\ast(-\infty, +\infty; {\cal M}({\bf R}^2))$ the set $G=\bigcup_s({\bf R}^2\setminus {\cal S}_s)\times \{ s\}$ is open in ${\bf R}^2\times (-\infty, +\infty)$.  Furthermore, the above $g=g(y,s)$ is smooth in $G$ and it holds that 
\begin{eqnarray} 
& & g_s=\Delta g-\nabla\cdot g\nabla w \quad \mbox{in $G$} \nonumber\\ 
& & w(y,s)=\frac{\vert y\vert^2}{4}+4\sum_{j=1}^{m(s)}\log\frac{1}{\vert y-y_\infty^j(s)\vert}+\Gamma\ast g. 
 \label{eqn:1.229}
\end{eqnarray} 
Let 
\begin{equation} 
v(y,s)=\vert y\vert^2/4+(\Gamma\ast g)(y,s). 
 \label{eqn:v}
\end{equation} 
Henceforth, we put 
\begin{equation}
\overline{f}(r)=\frac{1}{2\pi}\int_0^{2\pi}f(re^{\imath \theta})d\theta 
 \label{eqn:41}
\end{equation} 
for $f=f(y)$, $y\in {\bf R}^2$, where $y=re^{\imath \theta}$ is the polar coordinate.  

We take $B_r=B(0,r)$, $r>R$, and $s_0\in {\bf R}$, to set 
\[ \displaystyle{ B^\varepsilon_r=B_r\setminus \bigcup_{j=1}^{m(s_0)}B(y_\infty^j(s_0), \varepsilon)}. \] 
By Lemma \ref{thm:ss02a}, again, any collision of the points in ${\cal S}_s$ does not occur as $s$ varies.  Therefore, making $0<\varepsilon\ll 1$, we obtain $\sharp \left(B(y^j_\infty(s_0),\varepsilon)\cap {\cal S}_s\right)\leq 1$ and 
\[ y_\infty^i(s)\in B(y^j_\infty(s_0), \varepsilon) \quad \Rightarrow \quad (y-y_\infty^i(s))\cdot \nu_y\geq 0, \ y\in \partial B(y^j_\infty(s_0), \varepsilon) \] 
for $1\leq j\leq m(s_0)$ and $\vert s-s_0\vert \ll 1$, where $\nu_y$ denotes the outer unit normal vector. 

Then, (\ref{eqn:1.229}) implies 
\begin{equation} 
\frac{d}{ds}\int_{B_r^\varepsilon}g(y,s) \ dy \leq \int_{\partial B_r^\varepsilon}g_r(y,s) \ d\sigma_y -\int_{\partial B_r^\varepsilon}(gv_r)(y,s) \ d\sigma_y
 \label{eqn:1.232-1}
\end{equation} 
for $\vert s-s_0\vert \ll 1$, where $d\sigma=d\sigma_y$ denotes the line element. We integrate (\ref{eqn:1.232-1}) in $t$, to convert it to an inequality on the difference quotient with the mesh $h>0$. Now, making $\varepsilon\downarrow 0$ and then $h\downarrow 0$, we obtain 
\begin{equation} 
\frac{d^+}{ds}\int_{B_r}g \ dy \leq \int_{\partial B_r}g_r \ d\sigma-\int_{\partial B_r}(gv_r) \ d\sigma =\int_{\partial B_r}g_r-\frac{r}{2}g -g(\Gamma\ast g)_r \ d\sigma, 
 \label{eqn:1.239+2}
\end{equation} 
where 
\[ \frac{d^+}{ds}A(s)=\limsup_{h\downarrow 0}\frac{1}{h}(A(s+h)-A(s)). \] 
Then, inequality (\ref{eqn:1.239+2}) is valid to any $r>R$ and $-\infty<s<+\infty$. 

From $\Vert g(\cdot,s)\Vert_1+\Vert g(\cdot,s)\Vert_\infty\leq C_{12}$, it follows that 
\[ \sup_s\Vert (\nabla \Gamma\ast g)(\cdot,s)\Vert_\infty<+\infty. \] 
Then, (\ref{eqn:1.239+2}) implies 
\begin{eqnarray} 
& & \frac{d^+}{ds}\int_{B_r}g \leq \int_{\partial B_r}g_r-\frac{r}{2}g+C_{13}g \ d\sigma \nonumber\\ 
& & \quad = \frac{d}{dr}(r\int_0^{2\pi}g \ d\theta)-\int_0^{2\pi}(1+\frac{r^2}{2})g \ d\theta+C_{13}r\int_0^{2\pi}g \ d\theta,  
 \label{eqn:1.240+2}
\end{eqnarray} 
which means 
\begin{equation} 
\frac{d^+}{ds}\int_0^rr\overline{g}dr\leq \frac{d}{dr}(r\overline{g})-(\frac{r^2}{2}-C_{13}r+1)\overline{g}, \quad r>R, \ -\infty<s<+\infty, 
 \label{eqn:1.241+2}
\end{equation} 
recalling (\ref{eqn:41}). Here we have 
\begin{equation} 
B(r,s)\equiv \int_0^rr\overline{g} \ dr\geq \delta-\frac{C_{14}}{r^2+1}, \quad r>R, \ s\in {\bf Q} 
 \label{eqn:1.243-3}
\end{equation}  
by (\ref{eqn:38}).  The strong maximum principle, on the other hand, implies also $B(r,s)>0$ for any $(r,s)$.  

Inequality (\ref{eqn:1.241+2}) means 
\[ \partial_s^+B\leq B_{rr}-a(r)B_r, \quad a(r)=\frac{r}{2}-C_{13}+\frac{1}{r}. \] 
Using $\displaystyle{ A(r)=\frac{r^2}{4}-C_{13}r+\log r}$, we have $a=A'$ and hence 
\[ \partial_s^+(e^{-A}B)\leq (e^{-A}B_r)_r, \quad r>R, \ -\infty<s<+\infty \] 
by $B_{rr}-a(r)B_r=e^A(e^{-A}B_r)_r$.  Now we take $r_1>R$ such that 
\[ a(r)\geq \frac{r}{4}+1, \quad \delta-\frac{C_{14}}{r^2+1}\geq \delta/2, \qquad r\geq r_1. \] 

Let $0\leq \varphi=\varphi(r)$ be a $C^1$ function on $[r_1,\infty)$, piecewise $C^2$, satisfying $\varphi(r_1)=0$, $\varphi(r)>0$ for $r>r_1$, and 
\begin{equation} 
\int_{r_1}^\infty e^{-A}\varphi \ dr<+\infty, \quad 
\lim_{r\uparrow+\infty}e^{-A}(\varphi+\varphi_r)=0. 
 \label{eqn:1.243+3}
\end{equation} 
Then we put $\varphi(r)=0$ for $r\in [0,r_1]$. Since $B\geq 0$, $B_r\geq 0$, $B(\infty,t)\leq C_{15}$, each $s\in {\bf R}$ admits $r_j\uparrow+\infty$ such that $B_r(r_j,s)\rightarrow 0$, which guarantees 
\begin{eqnarray} 
& & \frac{d^+}{ds}\int_0^\infty e^{-A}B\varphi \ dr\leq \int_0^\infty(e^{-A}B_r)_r\varphi \ dr=-\int_0^\infty e^{-A}B_r\varphi_r \ dr \nonumber\\ 
& & \quad \leq \int_0^\infty(e^{-A}\varphi_r)_rB \ dr =\int_0^\infty (\varphi_{rr}-a\varphi_r)e^{-A}B \ dr.  
 \label{eqn:1.243+2}
\end{eqnarray} 
We impose, furthermore, that the existence of $\mu>0$ such that 
\begin{equation} 
\varphi_{rr}-a\varphi_r\leq -\mu\varphi, \quad r\geq r_1 
 \label{eqn:1.243-2} 
\end{equation}
To assure all the above requirements to $\varphi(r)$, we take $0<c_1\ll 1$, for example, and put 
\[ \varphi(r)=\left\{ \begin{array}{ll} 
c_1r+c_2, & r\geq r_2 \\ 
\sin\beta(r-r_1), & r_1\leq r<r_2\equiv r_1+\frac{\pi}{4\beta} \end{array} \right. \] 
where 
\[ r_2=r_1+\frac{\pi}{4\sqrt{2}c_1}, \quad c_2=\frac{1}{\sqrt{2}}(1-\frac{\pi}{4})-r_1c_1, \quad \beta=\sqrt{2}c_1. \] 
Then we see $0\leq \varphi=\varphi(r)\in C^1[r_1,\infty)$, $\varphi(r_1)=0$, and (\ref{eqn:1.243+3}). Making $c_1\downarrow 0$, on the other hand, we obtain $r_2\uparrow+\infty$.  Therefore, (\ref{eqn:1.243-2}) arises for $0<\mu\ll 1$ by 
\[ \varphi_{rr}=\left\{ \begin{array}{ll} 
0, & r\geq r_2 \\ 
-\beta^2\varphi, & r_1\leq r<r_2 \end{array} \right. \] 
and 
\[ a(r)=\frac{r}{2}-C_{13}+\frac{1}{r}, \quad \varphi_r=c_1>0,  \qquad r\geq r_2. \] 

Since (\ref{eqn:1.243+2}) is obtained we have 
\[ \frac{d^+}{ds}\int_0^\infty e^{-A}B\varphi \ dr \leq -\mu \int_0^\infty e^{-A}B\varphi \ dr, \quad s\in {\bf R}. \] 
This means 
\[ \frac{d^+}{ds}\left\{ e^{\frac{\mu}{2} s}\int_0^\infty e^{-A}B\varphi \ dr\right\} < 0, \quad s\in {\bf R} \] 
and hence  
\[ \lim_{s\uparrow+\infty}\int_0^\infty e^{-A}B(\cdot,s)\varphi \ dr=0. \] 
We have, on the other hand, 
\[ \int_0^\infty e^{-A}B(\cdot,s)\varphi \ dr \geq \frac{\delta}{2}\int_0^\infty e^{-A}\varphi \ dr >0, \quad s\in {\bf Q} \] 
by (\ref{eqn:1.243-3}), a contradiction.  
\end{proof}

\begin{flushright}
Takashi Suzuki \\ 
Division of Mathematical Science \\ 
Department of Systems Innovation \\ 
Graduate School of Engineering Science \\ 
Osaka University \\ 
Machikaneyamacho 1-3 \\ 
Toyonakashi, 560-8531, Japan \\ 
suzuki@sigmath.es.osaka-u.ac.jp 
\end{flushright} 


\begin{thebibliography}{99}




\bibitem{bil98}P. Biler, {\it Local and global solvability of some systems modelling chemotaxis}, Adv. Math. Sci. Appl. {\bf 8} (1998) 715-743. 














\bibitem{cp81}S. Childress and J.K. Percus, {\it Nonlinear aspects of chemotaxis}, Math. Biosci. {\bf 56} (1981) 217-237. 



\bibitem{ds09}J. Dolbeault and C. Schmeiser, {\it The two-dimensional Keller-Segel model after blowup}, Discrete and Contin. Dyn. Syst. Ser. A {\bf 25} (2009) 109-121. 







\bibitem{gz98}H. Gajewski and K. Zacharias, {\it Global behaviour of a reaction-diffusion system modelling chemotaxis}, Math. Nachr. {\bf 195} (1998) 77-114.  





\bibitem{hv96}M.A. Herrero and J.J.L. Vel\'azquez, {\it Singularity patterns in a chemotaxis model}, Math. Ann. {\bf 306} (1996) 583-623. 



\bibitem{jl92}W. J\"{a}ger and S. Luckhaus, {\it On explosions of solutions to a system of partial differential equations modelling chemotaxis}, Trans. Amer. Math. Soc. {\bf 329} (1992) 819-824. 









\bibitem{lions84}P.L. Lions, {\it The concentration-compactness principle in the calculus of variation. The locally compact case, Part I}, Ann. Inst. H. Poincar\'e, Analyse nonlin\'eaire {\bf 1} (1984) 109-145. 


\bibitem{lsv12}S. Luckhaus, Y. Sugiyama, and J.J.L. Vel\'azquez, {\it Measure valued solutions of the 2D Keller-Segel system}, Arch. Rational Mech. Anal. {\bf 206} (2012) 31-80. 


\bibitem{nag95}T. Nagai, {\it Blow-up of radially symmetric solutions to a chemotaxis system},  Adv. Math. Sci. Appl. {\bf 5} (1995) 581-601. 

\bibitem{nag01}T. Nagai, {\it Blowup of nonradial solutions to parabolic-elliptic systems modeling chemotaxis in two-dimensional domains}, J. Inequal. Appl. {\bf 6} (2001) 37-55. 


\bibitem{nsy97}T. Nagai, T. Senba, and K. Yoshida, {\it Application of the Trudinger-Moser inequality to a parabolic system of chemotaxis}, Funkcial. Ekvac. {\bf 40} (1997) 411-433. 


\bibitem{ns08}Y. Naito and T. Suzuki, {\it Self-similarity in chemotaxis systems}, Colloquium Mathematics {\bf 111} (2008) 11-34. 







\bibitem{ssv13}Y. Seki, Y. Sugiyama, and J.J.L. Vel\'azquez, {\it Multiple peak aggregations for the Keller-Segel system}, Nonlinearity {\bf 26} (2013) 319-352. 

\bibitem{sen07}T. Senba, {\it Type II blowup of solutions to a simplified Keller-Segel system in two dimensions}, Nonlinear Analysis {\bf 66} (2007) 1817-1839. 

\bibitem{ss00}T. Senba and T. Suzuki, {\it Some structures of the solution set for a stationary system of chemotaxis}, Adv. Math. Sci. Appl. {\bf 10} (2000) 191-224. 

\bibitem{ss01a}T. Senba and T. Suzuki, {\it Chemotactic collapse in a parabolic-elliptic system of mathematical biology}, Adv. Differential Equations {\bf 6} (2001) 21-50. 

\bibitem{ss01b}T. Senba and T. Suzuki , {\it Parabolic system of chemotaxis: blowup in a finite and the infinite time}, Meth. Appl. Anal {\bf 8} (2001) 349-368.  
\bibitem{ss02a}T. Senba and T. Suzuki, {\it Weak solutions to a parabolic-elliptic system of chemotaxis}, J. Funct. Anal. {\bf 191} (2002) 17-51. 

\bibitem{ss02b}T. Senba and T. Suzuki, {\it Time global solutions to a parabolic-elliptic system modelling chemotaxis}, Asymptotic Analysis {\bf 32} (2002) 63-89. 



\bibitem{sc02}C. Sire and P.-H. Chavanis, {\it Thermodynamics and collapse of self-gravitating Brownian particles in $D$ dimensions}, Phys. Rev. E {\bf 66} (2002) 046133. 


\bibitem{struwe}M. Struwe, {\it Variational Methods, Applications to Nonlinear Partial Differential Equations and Hamilton Systems}, third edition, Springer Verlag, Berlin, 2000. 



\bibitem{suzuki05}T. Suzuki, {\it Free Energy and Self-Interacting Particles}, Birkh\"auser, Boston, 2005. 



\bibitem{s13}T. Suzuki, {\it Exclusion of boundary blowup for 2D chemotaxis system provided with the Dirichlet boundary condition for the Poisson part}, J. Math. Pure Appl. 100 (2013) 347-367. 

\bibitem{s14}T. Suzuki, {\it Almost collapse mass quantization in 2D Smoluchowski-Poisson equation}, arXive1311.5679. 


\bibitem{suzuki11}T. Suzuki and T. Senba, {\it Applied Analysis - Mathematical Methods in Natural Science}, 2nd edition, Imperial College Press, 2011. 




\end{thebibliography}
\end{document}